\theoremstyle{plain}
\newtheorem{theorem}{Theorem}
\newtheorem{lemma}[theorem]{Lemma}
\newtheorem{corollary}[theorem]{Corollary}
\theoremstyle{definition}
\newtheorem{conjecture}[theorem]{Conjecture}
\theoremstyle{remark}
\begin{document}
\title{An inertial lower bound for the chromatic number\\ of a graph}
\author{Clive Elphick \\
\small School of Mathematics \\[-0.8ex]
\small University of Birmingham \\[-0.8ex]
\small Birmingham, U.K. \\
\small \tt C.H.Elphick@bham.ac.uk \\
\and Pawel Wocjan \\
\small Department of Computer Science \\[-0.8ex]
\small University of Central Florida \\[-0.8ex]
\small Florida, U.S.A. \\
\small \tt wocjan@cs.ucf.edu
}

\date{March 6, 2017}  

\maketitle

\abstract{Let $\chi(G$) and $\chi_f(G)$ denote the chromatic and fractional chromatic numbers of a graph $G$, and let $(n^+ , n^0 , n^-)$ denote the inertia of $G$. We prove that:

\[
1 + \max\left(\frac{n^+}{n^-} , \frac{n^-}{n^+}\right) \le \chi(G) \mbox{  and conjecture that  } 1 + \max\left(\frac{n^+}{n^-} , \frac{n^-}{n^+}\right) \le \chi_f(G)
\]

We investigate extremal graphs for these bounds and demonstrate that this inertial bound is not a lower bound for the vector chromatic number. We conclude with a discussion of asymmetry between $n^+$ and $n^-$, including some Nordhaus-Gaddum bounds for inertia.}

\section{Introduction}

Let $G$ be  a graph with $n\ge 2$ vertices, chromatic number $\chi(G)$, fractional chromatic number $\chi_f(G)$ and independence number $\alpha(G)$. Let $A$ denote the adjacency matrix of $G$ and let $\mu_1 \ge  \mu_2 \ge ... \ge \mu_n$ denote the eigenvalues of $A$. The inertia of $A$ is the ordered triple $(n^+, n^0, n^-)$ where $n^+, n^0, n^-$ are the numbers (counting multiplicities) of positive, zero and negative eigenvalues of $A$ respectively. Note that $\mathrm{rank}(A) = n^+ + n^-$ and $\mathrm{nullity}(A) = n^0$.

\section{An inertial lower bound}

\begin{theorem}
Let $G$ be any graph with inertia $(n^+, n^0, n^-)$. Then

\[
1 + \max\left(\frac{n^+}{n^-} , \frac{n^-}{n^+}\right) \le \chi(G).
\]
\end{theorem}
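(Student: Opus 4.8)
The plan is to fix an optimal proper coloring of $G$ and exploit the resulting block structure of the adjacency matrix together with the subadditivity of the positive (and negative) inertia. Write $k = \chi(G)$ and partition $V(G)$ into colour classes $V_1,\dots,V_k$. Reordering the vertices accordingly, $A$ becomes a $k\times k$ block matrix $A=(A_{ij})$ in which every diagonal block $A_{ii}$ vanishes, since each $V_i$ is independent.

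The key device is an averaging identity that annihilates the (already zero) diagonal blocks while leaving the inertia of each summand unchanged. Let $\omega=e^{2\pi i/k}$ and, for $\ell=0,1,\dots,k-1$, let $D_\ell$ be the block-diagonal \emph{unitary} matrix whose $j$-th diagonal block is $\omega^{\ell j}I_{|V_j|}$. A direct computation shows that the $(i,j)$ block of $\sum_{\ell=0}^{k-1} D_\ell A D_\ell^{*}$ equals $\big(\sum_{\ell=0}^{k-1}\omega^{\ell(i-j)}\big)A_{ij}$, which is $kA_{ii}=0$ when $i=j$ and $0$ when $i\ne j$ (a geometric sum over a nontrivial $k$-th root of unity, valid because $i,j\in\{1,\dots,k\}$ so $i\equiv j \pmod k$ only when $i=j$). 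Hence $\sum_{\ell=0}^{k-1} D_\ell A D_\ell^{*}=0$, and isolating the $\ell=0$ term (with $D_0=I$) gives $A=-\sum_{\ell=1}^{k-1} D_\ell A D_\ell^{*}$.

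I would then invoke two standard facts about Hermitian matrices: unitary conjugation preserves inertia, so $n^+(D_\ell A D_\ell^{*})=n^+$ for every $\ell$; and the positive inertia is subadditive, $n^+(X+Y)\le n^+(X)+n^+(Y)$, a consequence of Weyl's eigenvalue inequalities. Applying these to $A=-\sum_{\ell=1}^{k-1} D_\ell A D_\ell^{*}$ yields
\[
n^- \;=\; n^+(-A) \;=\; n^+\!\Big(\textstyle\sum_{\ell=1}^{k-1} D_\ell A D_\ell^{*}\Big) \;\le\; \sum_{\ell=1}^{k-1} n^+(D_\ell A D_\ell^{*}) \;=\; (k-1)\,n^+ .
\]
Running the same argument with $-A$ in place of $A$ (whose diagonal blocks still vanish) gives $n^+\le (k-1)\,n^-$.

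To finish: if $G$ has an edge then $\mathrm{tr}(A)=0$ together with $A\ne 0$ forces $n^+,n^-\ge 1$, so both ratios $n^+/n^-$ and $n^-/n^+$ are at most $k-1$, and rearranging gives $1+\max(n^+/n^-,\,n^-/n^+)\le k=\chi(G)$; if $G$ is edgeless the inequality reads $1\le 1$. The only real subtlety I anticipate is discovering and verifying the averaging identity (and keeping the block indices in $\{1,\dots,k\}$ straight); everything afterwards is routine manipulation of well-known inertia inequalities. It is worth remarking that the very same identity, combined with subadditivity of $\lambda_{\max}$ rather than of $n^+$, reproduces the classical Hoffman-type bound $\chi(G)\ge 1+\mu_1/|\mu_n|$.
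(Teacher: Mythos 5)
Your proof is correct, and while it rests on the same algebraic identity that drives the paper's argument, the way you extract the inertia inequality from it is genuinely different. Your roots-of-unity construction of the block-diagonal unitaries $D_\ell$ is precisely how equation (1) of the paper, $\sum_{i=1}^{\chi-1}U_iAU_i^*=-A$, is established in the cited reference, so up to that point you have simply reproved the paper's starting point rather than citing it. The divergence comes next: the paper splits $A=B-C$ into its positive and negative semidefinite parts, conjugates by the projector $P^-$ onto the negative eigenspace, and compares ranks of positive semidefinite matrices via its Lemma~\ref{semirank}; you instead invoke Sylvester's law of inertia (unitary conjugation preserves $n^+$) together with the Weyl-type subadditivity $n^+(X+Y)\le n^+(X)+n^+(Y)$, applied directly to $-A=\sum_{\ell=1}^{k-1}D_\ell AD_\ell^*$. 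Both are rigorous; yours is shorter and avoids the projector bookkeeping, and since it uses nothing about the $U_i$ beyond unitarity, it applies verbatim to the abstract identity (1) and hence would also deliver the paper's Corollaries on the normalized orthogonal rank $\xi'(G)$ and on Hermitian weight matrices $W$. Your handling of the degenerate edgeless case (where the ratios $n^+/n^-$ are undefined) and the observation that $\mathrm{tr}(A)=0$ with $A\ne 0$ forces $n^+,n^-\ge 1$ are both correct and slightly more careful than the paper itself.
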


To better organize the proof of this theorem we formulate the following simple lemma. Recall that positive semidefinite matrices are Hermitian matrices with non-negative eigenvalues and $Z^*$ denotes the Hermitian transpose for matrices $Z\in\mathbb{C}^{n\times n}$. 

\begin{lemma}\label{semirank}
Let $X,Y\in\mathbb{C}^{n\times n}$ be two positive semidefinite matrices satisfying $X \succeq Y$, that is, their difference $X - Y$ is positive semidefinite. Then
\[
\mathrm{rank}(X) \ge \mathrm{rank}(Y).
\]
\end{lemma}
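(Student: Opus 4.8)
The plan is to reduce the rank inequality to an inclusion of null spaces: I will show that $\ker(X) \subseteq \ker(Y)$, and then the rank--nullity theorem gives $\mathrm{rank}(Y) = n - \dim\ker(Y) \le n - \dim\ker(X) = \mathrm{rank}(X)$, which is exactly the claim.

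To prove the inclusion, I would take any $v \in \mathbb{C}^n$ with $Xv = 0$ and show $Yv = 0$. From $Xv = 0$ we get $v^*Xv = 0$. Since $X \succeq Y$, the matrix $X - Y$ is positive semidefinite, so $v^*(X-Y)v \ge 0$, whence $v^*Yv \le v^*Xv = 0$. But $Y$ is positive semidefinite, so also $v^*Yv \ge 0$; therefore $v^*Yv = 0$. It then remains to deduce $Yv = 0$ from $v^*Yv = 0$. Writing $Y = Z^*Z$ for some $Z \in \mathbb{C}^{n\times n}$ (for instance $Z = Y^{1/2}$, the positive semidefinite square root, which exists by the spectral theorem), we obtain $0 = v^*Z^*Zv = \|Zv\|^2$, hence $Zv = 0$ and thus $Yv = Z^*Zv = 0$, as needed.

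I do not expect any genuine obstacle here; the only ingredient beyond elementary linear algebra is the standard fact that every positive semidefinite matrix factors as $Z^*Z$, equivalently that $v^*Yv = 0$ forces $Yv = 0$ for such $Y$. If one prefers to avoid square roots, the same conclusion follows by expanding $v$ in an orthonormal eigenbasis of $Y$: then $v^*Yv$ is a nonnegative combination of the eigenvalues of $Y$ weighted by the squared coordinates of $v$, and it vanishes only if $v$ has no component along eigenvectors with positive eigenvalue, i.e.\ only if $v \in \ker(Y)$.
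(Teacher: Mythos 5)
Your proof is correct. The central computation is the same one the paper uses---evaluate the quadratic form of $X-Y$ at a vector annihilated by $X$ and use positive semidefiniteness of both $X-Y$ and $Y$---but you organize it differently: you give a direct argument establishing the kernel inclusion $\ker(X) \subseteq \ker(Y)$ (equivalently, $\mathrm{range}(Y) \subseteq \mathrm{range}(X)$, since both matrices are Hermitian) and then invoke rank--nullity, whereas the paper argues by contradiction, using a dimension count to produce a nonzero vector in the range of $Y$ orthogonal to the range of $X$ and deriving $v^*(X-Y)v < 0$. Your version proves slightly more (the actual range inclusion, not just the rank inequality) and avoids both the contradiction and the implicit appeal to the fact that a nonzero vector in $\mathrm{range}(Y)$ has $v^*Yv > 0$; the paper's version is marginally shorter because it never needs to pass from $v^*Yv = 0$ to $Yv = 0$. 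Your handling of that last step (via the factorization $Y = Z^*Z$ or the eigenbasis expansion) is standard and correct, so there is no gap.
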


\begin{proof}
Assume that $\mathrm{rank}(X) < \mathrm{rank}(Y)$. Then, there exists a non-trivial vector $v$ in the range of $Y$ that is orthogonal to the range of $X$. Consequently,
\[
v^*(X - Y)v = -v^* Y v < 0
\]
contradicting that $X - Y$ is positive semidefinite.
\end{proof}

\begin{proof}[Proof of Theorem 1]
Wocjan and Elphick \cite{wocjan13} proved that there exist $\chi$ unitary matrices $U_i$ such that:

\begin{equation}\label{eq:pawel}
\sum_{i=1}^{\chi-1} U_i A U_i^* = -A.
\end{equation}
Let $v_1, ... , v_n$ denote the eigenvectors of unit length corresponding to the eigenvalues $\mu_1 \ge ... \ge \mu_n$. Let $A = B - C$ where 
\[
B = \sum_{i=1}^{n^+}\mu_iv_iv_i^*\quad\mathrm{and}\quad 
C = \sum_{i=n-n^-+1}^n (-\mu_i)v_iv_i^*.
\] 
Note that $B$ and $C$ are positive semidefinite and that $\mathrm{rank}(B) =n^+$ and $\mathrm{rank}(C) = n^-$. Let

\[
P^+ = \sum_{i=1}^{n^+} v_iv_i^* \mbox{  and  } P^- = \sum_{i=n-n^-+1}^n v_i v_i^*
\]
denote the orthogonal projectors onto the subspaces spanned by the eigenvectors corresponding to the positive and negative eigenvalues respectively. Observe that $B = P^+AP^+$ and $C = -P^-AP^-$.

Equation (\ref{eq:pawel}) can be rewritten as:

\[
\sum_{i=1}^{\chi-1} U_iBU_i^* - \sum_{i=1}^{\chi-1} U_iCU_i^* = C - B.
\]
Multiplying both sides by $P^-$ from the left and the right we obtain:

\[
P^-\sum_{i=1}^{\chi-1} U_iBU_i^* P^- - P^-\sum_{i=1}^{\chi-1} U_iCU_i^* P^- = C.
\]
Using that 
\[
P^-\sum_{i=1}^{\chi-1} U_iCU_i^* P^-
\]
is positive semidefinite, it follows that:

\[
P^-\sum_{i=1}^{\chi-1} U_iBU_i^* P^- \succeq C.
\]
Then using that the rank of a sum is less than or equal to the sum of the ranks of the summands, that the rank of a product is less than or equal to the minimum of the ranks of the factors, and Lemma~\ref{semirank}, we have that ($\chi - 1)n^+ \ge n^-.$ Similarly $(\chi - 1)n^- \ge n^+$ by multiplying equation (\ref{eq:pawel}) by $-1$ and multiplying both sides by $P^+$ from the left and right.
\end{proof}

\begin{corollary}\label{npmcorollary}
Let $G$ be any graph with inertia $(n^+, n^0, n^-)$. Then
\[
\max(n^+ , n^-) \le \frac{n(\chi - 1)}{\chi}.
\]
\end{corollary}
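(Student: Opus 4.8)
The plan is to derive this directly from Theorem~1. That theorem is equivalent to the pair of "cross" inequalities $(\chi-1)n^+ \ge n^-$ and $(\chi-1)n^- \ge n^+$, both of which are established in its proof. By symmetry it suffices to treat the case in which the maximum is attained by $n^+$; the other case follows by interchanging the roles of $n^+$ and $n^-$ throughout.

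So I would assume $n^+ \ge n^-$. First I would invoke the inequality $(\chi-1)n^- \ge n^+$ from the proof of Theorem~1, which gives $n^- \ge n^+/(\chi-1)$. Next, since $n = n^+ + n^0 + n^-$ and $n^0 \ge 0$, we have
\[
n \;\ge\; n^+ + n^- \;\ge\; n^+ + \frac{n^+}{\chi-1} \;=\; n^+ \cdot \frac{\chi}{\chi-1}.
\]
Rearranging yields $n^+ \le n(\chi-1)/\chi$, which is the claimed bound since $n^+ = \max(n^+,n^-)$ in this case.

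There is essentially no obstacle here: the argument is a one-line manipulation once Theorem~1 is available. The only points worth flagging are that one must apply the cross inequality $(\chi-1)n^- \ge n^+$ (rather than $(\chi-1)n^+ \ge n^-$) in order to bound the \emph{larger} of the two quantities, and that simply discarding the nonnegative term $n^0$ is what produces the clean closed form.
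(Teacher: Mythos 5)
Your proof is correct and follows essentially the same route as the paper: both arguments feed the cross inequality $(\chi-1)n^- \ge n^+$ from Theorem~1 into $n \ge n^+ + n^-$ (discarding $n^0$) and rearrange. The paper merely bounds $n^+$ and $n^-$ each separately rather than reducing to the larger one by symmetry, which is an immaterial difference.
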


\begin{proof}
\[
\chi(G) \ge 1 + \frac{n^+}{n^-} = \frac{n^- + n^+}{n^-} = \frac{n - n^0}{n - n^+ - n^0} \ge \frac{n}{n - n^+}.
\]
Re-arranging and repeating with $\chi \ge 1 + n^-/n^+$ completes the proof.
\end{proof}

Griffith and Luhanga \cite{griffith11} conjectured that for connected, planar graphs $n^- \le 2n/3$. $K_4$ provides a counter-example, but it follows from this corollary that for planar graphs, $n^- \le 3n/4$.

\begin{corollary}

Let $\xi'(G)$ denote the normalized orthogonal rank of a graph. Then

\[
1 + \max\left(\frac{n^+}{n^-} , \frac{n^-}{n^+}\right) \le \xi'(G) \le \chi(G).
\]
\end{corollary}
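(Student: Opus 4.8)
The approach I would take rests on the observation that $\chi(G)$ enters the proof of Theorem~1 at a single point, namely the unitary identity (\ref{eq:pawel}); everything after it is a rank inequality that never mentions $\chi$ again except through the number of summands. So I would first isolate this step and then supply the corresponding statement for the normalized orthogonal rank. Concretely, Wocjan and Elphick \cite{wocjan13} obtain (\ref{eq:pawel}) by building the matrices $U_i$ from the orthogonal representation that a proper colouring induces, but that construction goes through for any \emph{normalized} orthogonal representation; taking one of minimum dimension $d = \xi'(G)$, it produces unitary matrices $U_1,\dots,U_{\xi'-1}$ (together with $U_0 = I$) such that $\sum_{i=1}^{\xi'-1} U_i A U_i^* = -A$. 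The colouring of $G$ by $\chi(G)$ colours is the special case of this, which in particular re-proves the familiar inequality $\xi'(G) \le \chi(G)$.

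Granting that identity, the lower bound is the proof of Theorem~1 copied verbatim with $\xi'(G)$ in place of $\chi(G)$: write $A = B - C$ with $B,C$ positive semidefinite of ranks $n^+$ and $n^-$; conjugate $\sum_{i=1}^{\xi'-1} U_i B U_i^* - \sum_{i=1}^{\xi'-1} U_i C U_i^* = C - B$ by the projector $P^-$; discard the positive semidefinite term $P^- \sum_i U_i C U_i^* P^-$ to obtain $P^- \sum_i U_i B U_i^* P^- \succeq C$; and apply Lemma~\ref{semirank}, together with the facts that the rank of a sum is at most the sum of the ranks and the rank of a product is at most the minimum of the ranks of the factors, to get $(\xi'-1)\,n^+ \ge n^-$. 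Multiplying (\ref{eq:pawel}) for $\xi'$ by $-1$ and conjugating by $P^+$ gives $(\xi'-1)\,n^- \ge n^+$. Rearranging these two inequalities yields $1 + \max(n^+/n^-, n^-/n^+) \le \xi'(G)$, and together with $\xi'(G) \le \chi(G)$ this is the corollary.

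The step I expect to be the real obstacle is the very first one: checking that the unitary identity (\ref{eq:pawel}) descends from $\chi$ to $\xi'$ — that is, that a bare normalized orthogonal representation, rather than a colouring, already suffices to manufacture the $U_i$. This is precisely where the normalization hypothesis does the work (for an unnormalized orthogonal representation there is no evident way to produce the required unitaries, so it is plausible the bound genuinely fails for the ordinary orthogonal rank), and it is exactly the content of \cite{wocjan13} that I would invoke. Were that result not available, I would have to reconstruct it from scratch, producing the $U_i$ directly from the representation — the one genuinely non-routine ingredient.
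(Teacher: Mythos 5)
Your proposal matches the paper's proof exactly: the paper simply cites Wocjan and Elphick for the fact that $\chi(G)$ can be replaced by $\xi'(G)$ in equality (\ref{eq:pawel}), after which the argument of Theorem~1 goes through verbatim. You have correctly identified both the single point where the result from \cite{wocjan13} is needed and the fact that the rest is unchanged.
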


\begin{proof}
Wocjan and Elphick \cite{wocjan13} proved that $\chi(G)$ can be replaced with $\xi'(G)$ in equality (\ref{eq:pawel}).

\end{proof}

\begin{corollary}
Let $A=(a_{k\ell})$ denote the adjacency matrix of a graph and let $W=(w_{k\ell})$ denote an arbitrary Hermitian matrix such that $a_{k\ell}=0$ implies $w_{k\ell}=0$ for all $k,\ell$.

\[
1 + \max\left(\frac{n^+(W)}{n^-(W)} , \frac{n^-(W)}{n^+(W)}\right) \le  \chi(G).
\]
\end{corollary}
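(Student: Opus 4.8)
The plan is to re-run the proof of Theorem~1 essentially verbatim, with the Hermitian matrix $W$ playing the role of the adjacency matrix $A$. The only point at which that proof genuinely uses a property of $A$ is equation~(\ref{eq:pawel}); everything afterwards is purely spectral/linear-algebraic and applies to any Hermitian matrix. So the first and only substantive step is to establish the analogue of (\ref{eq:pawel}) for $W$, namely that there exist $\chi-1$ unitary matrices $U_1,\dots,U_{\chi-1}$ with $\sum_{i=1}^{\chi-1} U_i W U_i^* = -W$.

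To do this I would unpack the construction behind (\ref{eq:pawel}) in \cite{wocjan13} rather than quote the statement as a black box. Fix a proper colouring $c : V(G) \to \{0,1,\dots,\chi-1\}$, put $\zeta = e^{2\pi \mathrm{i}/\chi}$, and let $U_i$ be the diagonal unitary matrix with $(U_i)_{kk} = \zeta^{\,i\,c(k)}$, together with $U_0 = I$. For \emph{any} matrix $M=(m_{k\ell})$ the $(k,\ell)$ entry of $\sum_{i=0}^{\chi-1} U_i M U_i^*$ equals $m_{k\ell}\sum_{i=0}^{\chi-1}\zeta^{\,i(c(k)-c(\ell))}$, which vanishes whenever $c(k)\ne c(\ell)$. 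Apply this with $M=W$: the diagonal entries $w_{kk}$ vanish because $a_{kk}=0$ forces $w_{kk}=0$; an off-diagonal entry $w_{k\ell}$ is either zero, or else nonzero, in which case the hypothesis (contrapositively) gives $a_{k\ell}\ne 0$, so $k\ell\in E(G)$, so $c(k)\ne c(\ell)$ and the corresponding geometric sum is zero. Hence $\sum_{i=0}^{\chi-1} U_i W U_i^* = 0$, which rearranges to the desired identity.

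With this identity in hand the rest is a transcription of the proof of Theorem~1: write $W = B - C$, where $B$ and $C$ are the positive semidefinite matrices assembled from the positive and negative parts of the spectrum of $W$, with $\mathrm{rank}(B)=n^+(W)$ and $\mathrm{rank}(C)=n^-(W)$; let $P^+,P^-$ be the orthogonal projectors onto the corresponding eigenspaces; conjugate the $W$-analogue of (\ref{eq:pawel}) by $P^-$ on both sides, discard the positive semidefinite term $P^-\sum_i U_i C U_i^* P^-$, and apply Lemma~\ref{semirank} together with the sub-additivity and sub-multiplicativity of rank to obtain $(\chi-1)\,n^+(W)\ge n^-(W)$. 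Repeating with $-W$ and $P^+$ yields $(\chi-1)\,n^-(W)\ge n^+(W)$, and the two inequalities together give the stated bound.

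The only delicate point—and the step I would be most careful to get right—is verifying that the construction in \cite{wocjan13} produces the \emph{same} family of unitaries for every Hermitian matrix supported on $E(G)$ with zero diagonal; this is exactly why I would invoke the explicit diagonal-colouring construction rather than the bare statement of (\ref{eq:pawel}). Beyond that there is no real obstacle: the corollary amounts to the observation that the proof of Theorem~1 never used that the entries of $A$ are $0$ or $1$, only that $A$ is Hermitian, has zero diagonal, and is supported on the edge set of $G$.
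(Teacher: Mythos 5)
Your proposal is correct and follows essentially the same route as the paper: the paper's entire proof is a citation of \cite{wocjan13} for the fact that $A$ may be replaced by $W$ in equation~(\ref{eq:pawel}), after which the argument of Theorem~1 applies verbatim, exactly as you describe. The only difference is that you unpack the diagonal-unitary construction (with $(U_i)_{kk}=\zeta^{\,i\,c(k)}$) to verify that identity directly for any Hermitian $W$ supported on the edge set with zero diagonal, which makes the argument self-contained; your verification, including the observation that $c(k)=c(\ell)$ forces $w_{k\ell}=0$ via the hypothesis on the support of $W$, is sound.
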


\begin{proof}
Wocjan and Elphick \cite{wocjan13} proved that $A$ can be replaced with $W$ in equality (\ref{eq:pawel}).
\end{proof}

\subsection{Extremal graphs}

We can compare this lower bound against the well known bound due to Hoffman \cite{hoffman70} that $1 + \mu_1/|\mu_n| \le \chi(G)$, and other bounds due to Nikiforov \cite{nikiforov07}, Kolotilina \cite{kolotilina10}, Wocjan and Elphick \cite{wocjan13}, Ando and Lin \cite{ando15} and Elphick and Wocjan \cite{elphick15}.

Theorem~1 is exact for example for bipartite graphs ($n^+ = n^-$), regular complete $q$-partite graphs ($n^+ = 1 , n^- = q - 1$) and $24$-cell $(n^+ = 5, n^- = 10)$, for which some of these other lower bounds are also exact. However Theorem 1 is also exact for example for the following singular and non-singular graphs, for which none of these other bounds is exact:
\begin{itemize}
\item barbell graphs on 2n vertices ($n^+ = 2 , n^- = 2n - 2$)
\item irregular complete $q$-partite graphs ($n^+ = 1, n^- = q - 1$)
\item various Circulant graphs such as Antiprism(9) ($n^+ = 5 , n^- = 10$)
\item Sextic(16,1) ($n^+ = 3 , n^- = 9$).
\end{itemize}
The full version of the Hoffman bound \cite{hoffman70} is that $\mu_1 + \mu_{n-\chi+2} + \ldots +\mu_n \le 0$, which is exact for all the itemised graphs.

\section{A conjectured stronger bound}

\begin{conjecture}\label{conjecture}
Let $G$ be any graph with inertia $(n^+, n^0, n^-)$. Then
\[
1 + \max\left(\frac{n^+}{n^-} , \frac{n^-}{n^+}\right) \le \chi_f(G).
\]
\end{conjecture}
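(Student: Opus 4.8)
The plan is to mimic the proof of Theorem 1, but replace the integer "coloring identity" \eqref{eq:pawel} with a fractional analogue. Recall that $\chi_f(G) = a/b$ where $G$ admits a proper $b$-fold coloring with $a$ colors, i.e.\ a homomorphism into the Kneser graph $K(a,b)$; equivalently, there is a probability distribution on independent sets of $G$ such that each vertex is covered with probability $\ge 1/\chi_f(G)$. The key structural fact I would want is a fractional version of the Wocjan–Elphick construction: if $\chi_f(G) = a/b$, then there should exist unitary matrices $U_1, \dots, U_{a}$ (or a suitable weighted family) such that an averaged conjugation sends $A$ to a negative multiple of itself. Concretely, I would aim to show that one can choose unitaries indexed by the $a$ color classes of a $b$-fold coloring so that
\[
\frac{1}{b}\sum_{i=1}^{a} U_i A U_i^* = \left(\frac{a}{b} - 1\right)(-A),
\]
or more precisely that there is a convex-combination identity $\sum_i \lambda_i U_i A U_i^* = (\chi_f - 1)(-A)$ with $\sum_i \lambda_i = \chi_f - 1$ and each $\lambda_i \ge 0$. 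The natural candidate for the $U_i$ is again diagonal phase matrices built from roots of unity, now using that each vertex lies in exactly $b$ of the $a$ classes, so the relevant character sums over the classes containing two adjacent vertices telescope in the same way as in the integer case.

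Granting such an identity, the rest goes through essentially verbatim. Writing $A = B - C$ with $B, C \succeq 0$ of ranks $n^+, n^-$ as in the theorem, and conjugating/compressing by the projector $P^-$, positivity of $P^-\big(\sum_i \lambda_i U_i C U_i^*\big)P^-$ gives
\[
P^-\Bigl(\sum_{i} \lambda_i U_i B U_i^*\Bigr) P^- \succeq (\chi_f - 1)\, C.
\]
Since $\chi_f - 1 > 0$, the right-hand side has rank $n^-$, while the left-hand side has rank at most $\sum_i \mathrm{rank}(\lambda_i U_i B U_i^*)$. Here I must be slightly more careful than in the integer case: the number of nonzero $\lambda_i$ could be large, so a naive "sum of ranks" bound is useless. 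Instead I would bound the rank of $\sum_i \lambda_i U_i B U_i^*$ by the dimension of the span of the ranges of the $U_i B U_i^*$, and argue combinatorially that this span has dimension at most $(\chi_f - 1)\, n^+$ — for instance by grouping the terms according to the color classes and using that the fractional cover structure lets $\chi_f - 1$ "copies" of the $n^+$-dimensional range of $B$ suffice. Combined with Lemma~\ref{semirank} this yields $(\chi_f - 1) n^+ \ge n^-$; the symmetric argument with $P^+$ gives $(\chi_f - 1) n^- \ge n^+$.

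The main obstacle is precisely the rank-counting step: in the integer proof the identity has exactly $\chi - 1$ summands, so "rank of a sum $\le$ sum of ranks" immediately gives $(\chi-1)n^+$. For $\chi_f$ one does not get a clean decomposition into $\chi_f - 1$ unitary conjugates of $B$, and the correct statement is really about the dimension of a sum of subspaces rather than a count of matrices. I expect one needs to exploit more finely the combinatorial geometry of a fractional coloring (a $b$-fold coloring with $a$ colors) to certify that the relevant subspace sum has dimension $\le (a/b - 1) n^+$; this is where a genuinely new idea beyond the Theorem~1 argument is required, and it is plausibly the reason the statement remains a conjecture rather than a theorem. An alternative route worth trying is to avoid unitaries entirely and instead derive the bound from an eigenvalue interlacing / quotient-matrix argument applied to the tensor product $G \times K_b$ or to the Kneser-graph homomorphism, transferring an inertia inequality from the target graph back to $G$; but controlling the inertia of a fractional "blow-up" under homomorphism seems at least as delicate.
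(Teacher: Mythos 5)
You should first be aware that the paper does not prove this statement in general: it is stated as a conjecture, and the paper only establishes the special case of non-singular graphs ($n^0=0$), by a completely different and much more elementary route. Namely, Cvetkovi\'c \emph{et al.} proved $\alpha(G) \le n^0 + \min(n^+,n^-)$, so when $n^0=0$ one gets $\chi_f \ge n/\alpha \ge n/\min(n^+,n^-) = (n^++n^-)/\min(n^+,n^-) = 1+\max(n^+/n^-,\,n^-/n^+)$. That argument breaks for singular graphs precisely because of the $n^0$ term, which is why the general statement remains open. If your goal was a full proof, you have not produced one; if a partial result would suffice, the independence-number route gets you the non-singular case in three lines without any unitary machinery.

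As for your actual proposal: the gap you flag is real, and it sits exactly where you say it does. The natural fractional analogue of the Wocjan--Elphick identity, built from a $b$-fold coloring with $a$ colors by putting $V_k = \tfrac{1}{b}\,\mathrm{diag}\bigl(\sum_{c\in S(v)}\omega^{kc}\bigr)$ with $\omega = e^{2\pi i/a}$, does give $\sum_{k=1}^{a-1} V_k A V_k^* = -A$ (disjointness of the color sets of adjacent vertices makes the character sum collapse to $-1$), but the $V_k$ are contractions rather than unitaries and, more fatally, there are $a-1$ of them. The rank/dimension count therefore yields only $(a-1)n^+ \ge n^-$, which is vacuous since $a$ can be taken arbitrarily large for a fixed $\chi_f = a/b$. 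Your hope of replacing ``number of summands'' by ``dimension of a sum of ranges bounded by $(\chi_f-1)n^+$'' is precisely the missing idea, and nothing in the fractional cover structure obviously certifies it: the ranges of the $V_k B V_k^*$ have no reason to overlap enough to cut the dimension from $(a-1)n^+$ down to $(a/b-1)n^+$. A further caution: any argument that only uses a relaxation of the coloring identity at the level of positive semidefinite averages risks proving the bound for the vector chromatic number as well, and the paper exhibits $C_5$ as a counterexample there ($\chi_v(C_5)=\sqrt5 < 2.5 = 1+n^+/n^-$); so whatever closes the gap must genuinely use fractional-coloring combinatorics and not just semidefinite structure. In short: your approach is a reasonable research direction, you have correctly located the obstruction, but the statement is not proved by your argument, nor by the paper.
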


We can prove this conjecture for non-singular graphs as follows.

\begin{proof}
A non-singular graph has $n^0 = 0$. Cvetkovi\'c \emph{et al.}~\cite{cvetkovic79} (page 88) proved that:

\[
\alpha (G) \le n^0 + \min(n^+ , n^-) = \min(n^+ , n^-).
\]
It is well known that $\chi_f(G) \ge n/\alpha(G)$, with equality for vertex-transitive graphs. Therefore

\[
\chi_f(G) \ge \frac{n}{\alpha} \ge \frac{n}{\min(n^+ , n^-)} = \frac{n^+ + n^-}{\min(n^+ , n^-)} = 1 + \max\left(\frac{n^+}{n^-} , \frac{n^-}{n^+}\right).
\]
\end{proof}

Conjecture~\ref{conjecture} is therefore true, for example, for all strongly regular graphs. Using standard notation, if $G$ is a strongly regular graph with spectrum $(k^1 , r^f , s^g)$ then:

\[
\chi_f \ge 1 + \max\left(\frac{n^+}{n^-} , \frac{n^-}{n^+}\right) = 1 + \max\left(\frac{1+f}{g} , \frac{g}{1+f}\right) = \max\left(\frac{n}{g} , \frac{n}{1+f}\right).
\]

\subsection{Extremal graphs}

Conjecture~\ref{conjecture} is exact for example for the Kneser graphs $K_{p,k}$, whose vertices correspond to the $k$-element subset of a set of $p$ elements, and where two vertices are joined if and only if the corresponding sets are disjoint. It is known that $\chi(K_{p,k}) = p - 2k + 2$ and that $\chi_f(K_{p,k}) = p/k$. The inertia of these graphs (see Godsil and Meagher \cite{godsil15} section 2.10) is as follows:

\[
n^+ = {p-1 \choose k} \mbox{   ;   }  n^0 = 0 \mbox{   ;   } n^- = {p-1 \choose k-1}.
\]
Consequently 

\[
1 + \max\left(\frac{n^+}{n^-} , \frac{n^-}{n^+}\right) = 1 + \frac{n^+}{n^-} = 1 + \frac{p - k}{k}  = \frac{p}{k} = \chi_f.
\]
Conjecture~\ref{conjecture} is also exact for cycles. This is obvious for even cycles. For the odd cycle on $2n+1$ vertices:

\[
1 + \max\left(\frac{n^+}{n^-} , \frac{n^-}{n^+}\right) = 1 + \frac{n+1}{n} = 2 + \frac{1}{n} = \chi_f.
\]

\section{Other graph parameters and an example}

There are many graph parameters that lie between the clique number, $\omega(G)$, and the chromatic number. For example 

\[
\omega(G) \le \chi_v(G) \le \theta(\overline{G}) \le \chi_f(G) \le \chi_c(G) \le \lceil \chi_c(G) \rceil = \chi(G),
\]
where $\chi_v(G)$ is the vector chromatic number, $\theta(\overline{G})$ is the Lov\'asz theta function of the complement of $G$ and $\chi_c(G)$ is the circular chromatic number. (These inequalities are sufficiently well known to be included in the Wikipedia entry on graph coloring and in \cite{brown09}.)

Bilu \cite{bilu06} proved that the Hoffman bound is a lower bound for the vector chromatic number
\[
1 + \frac{\mu_1}{|\mu_n|} \le \chi_v(G).
\]
We can demonstrate, however, using the self-complementary pentagon $(C_5)$, that
\[
1 + \max\left(\frac{n^+}{n^-} , \frac{n^-}{n^+}\right) \not\le \chi_v(G).
\]
The spectrum of $C_5$ is $\left(2, (\frac{-1+\sqrt{5}}{2})^2, (\frac{-1-\sqrt{5}}{2})^2\right)$ and Lov\'asz proved that $\theta(C_5) = \sqrt{5}$. Therefore

\begin{multline*}
\sqrt{5} = 1 + \frac{\mu_1}{|\mu_n|} =  \chi_v(C_5) = \theta(\overline{C_5}) = \theta(C_5) = \sqrt{5} < \\ 2.5 = 1 + \frac{n^+}{n^-}  = \chi_f(C_5) < \chi(C_5) = 3.
\end{multline*}
It is instructive to compare the various lower bounds for $\chi(G)$. Because $C_5$ is regular, the bounds due to Hoffman, Nikiforov and Kolotilina are all equal to each other at $\sqrt{5} = 2.236$. The generalisations of these bounds due to Wocjan and Elphick are also all equal to $\sqrt{5}$, as is a bound due to Lima \emph{et al.} \cite{lima11}. The bound due to Ando and Lin equals $2.1$. The bound in this paper equals $2.5$, because it uses the (integral) inertia of a graph instead of its eigenvalues.  However, since $\chi$ is integral, all of these bounds imply $\chi \ge 3$.

\section{On the asymmetry between $n^+$ and $n^-$}
The inertial bound discussed above suggests symmetry between $n^+$ and $n^-$. In this section we explore asymmetry between $n^+$ and $n^-$, beginning with Nordhaus-Gaddum bounds for inertia. 

In 1956, Nordhaus and Gaddum \cite{nordhaus56} proved that:

\[
2\sqrt{n} \le \chi(G) + \chi(\overline{G}) \le n + 1 \mbox{  and  } n \le \chi(G) \cdot \chi(\overline{G}) \le \frac{(n + 1)^2}{4}.
\]
Similar bounds, now known as Nordhaus-Gaddum type inequalities, have been obtained for numerous graph parameters. For example the survey paper by Aouchiche and Hansen \cite{aouchiche12} reviews scores of Nordhaus-Gaddum type inequalities. The survey does not, however, refer to inertia. 

\begin{theorem}

For any graph $G$, 

\begin{align*}
1     &\le n^+(G) + n^+(\overline{G}) \le n + 1 \\  
0     &\le n^0(G) + n^0(\overline{G}) \le n         \\
n-1  &\le n^-(G) + n^-(\overline{G}).
\end{align*}

\end{theorem}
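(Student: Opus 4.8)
The plan is to use the identity $A(G) + A(\overline{G}) = J - I$, where $J$ denotes the $n \times n$ all-ones matrix, together with a subadditivity property of inertia under addition of Hermitian matrices. Since $J - I$ has eigenvalue $n-1$ with multiplicity one and eigenvalue $-1$ with multiplicity $n-1$, its inertia is $(1, 0, n-1)$; the identity therefore lets us transfer inertial information between $G$, $\overline{G}$, and $J - I$.

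The key lemma I would record first is: for Hermitian $X, Y \in \mathbb{C}^{n \times n}$,
\[
n^+(X + Y) \le n^+(X) + n^+(Y) \quad\text{and}\quad n^-(X + Y) \le n^-(X) + n^-(Y).
\]
For the first inequality, let $S_X$ and $S_Y$ be the spans of the eigenvectors of $X$ and of $Y$ with non-positive eigenvalue, so $\dim S_X = n - n^+(X)$ and $\dim S_Y = n - n^+(Y)$. On $S_X \cap S_Y$, which has dimension at least $n - n^+(X) - n^+(Y)$, the quadratic form $v \mapsto v^*(X+Y)v$ is non-positive; as this subspace has trivial intersection with the span of the positive-eigenvalue eigenvectors of $X + Y$ (a subspace of dimension $n^+(X+Y)$), we obtain $n^+(X+Y) \le n^+(X) + n^+(Y)$. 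Applying this to $-X$ and $-Y$ gives the inequality for $n^-$.

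Next I would apply the lemma with $X = A(G)$ and $Y = A(\overline{G})$, so that $X + Y = J - I$. This yields at once the lower bounds
\[
1 = n^+(J - I) \le n^+(G) + n^+(\overline{G}), \qquad n - 1 = n^-(J - I) \le n^-(G) + n^-(\overline{G}),
\]
while $0 \le n^0(G) + n^0(\overline{G})$ is trivial. For the upper bounds, add the identities $n^+(H) + n^0(H) + n^-(H) = n$ for $H \in \{ G, \overline{G} \}$ to get that the six quantities sum to $2n$; subtracting the lower bound $n^-(G) + n^-(\overline{G}) \ge n - 1$ and using non-negativity of $n^0(G) + n^0(\overline{G})$ gives $n^+(G) + n^+(\overline{G}) \le n + 1$, and subtracting instead the bounds $n^+(G) + n^+(\overline{G}) \ge 1$ and $n^-(G) + n^-(\overline{G}) \ge n - 1$ gives $n^0(G) + n^0(\overline{G}) \le n$.

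I do not anticipate a genuine obstacle: the only non-formal ingredient is the subadditivity lemma, whose proof is a routine dimension count, and everything else is bookkeeping with the identity $n^+ + n^0 + n^- = n$. The one thing to watch is that the argument uses $n \ge 2$, so that $J - I$ actually has a positive eigenvalue, but this is assumed throughout the paper. It is worth noting in passing that the extremes $n^+(G) + n^+(\overline{G}) = 1$ and $n^-(G) + n^-(\overline{G}) = n - 1$ are attained simultaneously by the edgeless graph and its complement $K_n$, so these two bounds are sharp.
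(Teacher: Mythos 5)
Your proof is correct, and it reorganizes the argument around a different key lemma than the paper uses. The paper also starts from the identity $A(G)+A(\overline{G})=J-I=A(K_n)$, but works at the level of individual eigenvalues: it invokes the Courant--Weyl inequality $\mu_i(A)+\mu_j(B)\le\mu_{i+j-n}(A+B)$ for $i+j-n\ge 1$ to get $\mu_i(G)+\mu_{n-i+2}(\overline{G})\le\mu_2(K_n)=-1$ for $i\ge 2$, and then counts which eigenvalues of $\overline{G}$ are thereby forced to be negative. You instead prove, by a direct dimension count on the subspace $S_X\cap S_Y$, the subadditivity of inertia $n^{\pm}(X+Y)\le n^{\pm}(X)+n^{\pm}(Y)$, which is exactly the inertia-level consequence of Courant--Weyl; applied to $X=A(G)$, $Y=A(\overline{G})$ it immediately yields $n^+(G)+n^+(\overline{G})\ge n^+(J-I)=1$ and $n^-(G)+n^-(\overline{G})\ge n^-(J-I)=n-1$, after which the remaining bounds are bookkeeping with $n^++n^0+n^-=n$, just as in the paper. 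Your route is self-contained and arguably cleaner for the stated theorem; what the paper's finer eigenvalue-by-eigenvalue bookkeeping buys is the intermediate inequality $n^-(\overline{G})\ge n^+(G)+n^0(G)+j-1$, where $j$ counts eigenvalues of $G$ in $(-1,0)$ --- a slightly stronger statement that your inertia-only lemma does not see, though it is not needed for the theorem itself. Your observation that the empty graph and $K_n$ attain the extremes is consistent with the paper's remark that the lower bound for $n^+$ is exact for complete graphs.
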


\begin{proof}
The lower bound for $n^+$ is trivial and is exact for complete graphs. The lower bound for $n^0$ is exact for most graphs since "almost all" graphs have a non-integral spectrum. There does not seem to be a straightforward upper bound for $n^-$. For example the Generalised Petersen (15,4) graph on 30 vertices has $n^-(G) + n^-(\overline{G}) = 37$. 
 
The spectrum of $G$ begins:
\begin{multline*}
\mu_1 \ge \ldots \ge \mu_{n^+} > 0 = \mu_{n^+ + 1} = \ldots = \\ 
\mu_{n^+ + n^0} = 0 > \mu_{n^+ + n^0 + 1} \ge \ldots \ge \mu_{n^+ + n^0 + j} > -1
\end{multline*}
and continues
\[
-1 \ge \mu_{n^+ + n^0 + j + 1} \ge \ldots \ge \mu_n.
\]
One of the Courant-Weyl inequalities (see for example section 2.8 in \cite{brouwer10}) states that for Hermitian matrices $A$ and $B$
\[
\mbox{ if } i + j - n \ge 1  \mbox{ then } \mu_i(A) + \mu_j(B) \le \mu_{i+j-n}(A + B).
\]
Therefore
\[
\mu_i(G) + \mu_{n-i+2}(\overline{G}) \le \mu_2(K_n) = - 1, \mbox{  for  } i \ge 2.
\]
Therefore, for $i \ge 2$, if $\mu_i > -1$ then $\mu_{n-i+2}(\overline{G}) \le -1 - \mu_i(G) < 0$. So

\begin{equation}\label{clive}
n^-(\overline{G}) \ge n^+(G) + n^0(G) + j -1.
\end{equation}
The $-1$ term in (\ref{clive}) follows from the constraint that $i \ge 2$. Consequently
\[
n^-(G) + n^-(\overline{G}) \ge n^-(G) + n^+(G) + n^0(G) + j - 1 \ge n - 1.
\]
Similarly, using (\ref{clive}) again
\begin{eqnarray*}
n^+(G) + n^+(\overline{G}) & = & n^+(G) + n - n^-(\overline{G}) - n^0(\overline{G}) \\ 
& \le & n^+(G) +n -(n^+(G) + n^0(G) + j -1) - n^0(\overline{G}) \\
& \le & n + 1.
\end{eqnarray*}
Finally, using the bounds above
\begin{eqnarray*}
n^0(G) + n^0(\overline{G}) & = & 2n - (n^+(G) + n^+(\overline{G})) -(n^-(G) + n^-(\overline{G})) \\
& \le & 2n - 1 - (n - 1) = n.
\end{eqnarray*}
\end{proof}
We can show that the upper bound for $n^+(G) + n^+(\overline{G})$ and the lower bound for $n^-(G) + n^-(\overline{G})$ are exact for strongly regular graphs as follows.

\begin{corollary}
Let $G$ be a strongly regular graph. Then $n^+(G) + n^+(\overline{G}) = n + 1$ and $n^-(G) + n^-(\overline{G}) = n - 1$.
\end{corollary}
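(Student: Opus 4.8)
The plan is to exploit three standard facts about a (primitive) strongly regular graph $G$: it has exactly three distinct eigenvalues, its complement is again strongly regular, and the eigenvalues of $\overline{G}$ are explicit affine functions of those of $G$. Once all six eigenvalues and their signs are in hand, both identities drop out by counting multiplicities.

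First I would recall the spectral description. Write the spectrum of $G$ as $k^{1}\,r^{f}\,s^{g}$, where $k$ is the common degree (with the all-ones vector as eigenvector), $r$ and $s$ are the two restricted eigenvalues, and $1+f+g=n$. Since $A(\overline{G}) = J - I - A(G)$, and since $J$ annihilates the orthogonal complement of the all-ones vector while acting as $n$ on its span, $G$ and $\overline{G}$ share an eigenbasis, and the spectrum of $\overline{G}$ is $(n-1-k)^{1}\,(-1-s)^{g}\,(-1-r)^{f}$ (see, e.g., \cite{brouwer10}).

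The crux is then to pin down the signs of these six eigenvalues. From $rs=\mu-k$ with $r\ge s$, together with $\mu<k$ (which holds because $\overline{G}$ is connected), one gets $r>0>s$; and since the least eigenvalue of a connected non-complete graph is strictly below $-1$, one gets $s<-1$. Hence in the spectrum of $\overline{G}$ we have $n-1-k>0$ (as $G$ is not complete), $-1-s>0$, and $-1-r<0$, and none of the six eigenvalues is $0$. Reading off the inertias gives $n^{+}(G)=1+f$, $n^{0}(G)=0$, $n^{-}(G)=g$, while $n^{+}(\overline{G})=1+g$, $n^{0}(\overline{G})=0$, $n^{-}(\overline{G})=f$.

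Finally I would simply add up, using $1+f+g=n$: $n^{+}(G)+n^{+}(\overline{G})=(1+f)+(1+g)=2+(f+g)=n+1$, and $n^{-}(G)+n^{-}(\overline{G})=g+f=n-1$. I expect the only delicate point to be the treatment of degenerate cases: both identities genuinely fail for imprimitive strongly regular graphs — for instance $K_{3,3}$ gives left-hand sides $3$ and $4$ rather than $n+1=7$ and $n-1=5$ — so the statement must be read with the standing convention that $G$ is primitive, equivalently that $G$ and $\overline{G}$ are both connected and non-complete. Granting that convention, the strict inequalities $s<-1$ and $r>0$ are exactly what force $n^{0}(G)=n^{0}(\overline{G})=0$ and fix every sign, and the remainder is bookkeeping.
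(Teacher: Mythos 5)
Your proof is correct and rests on the same bookkeeping identity that drives the paper's argument, namely the multiplicity swap $f(G)=g(\overline{G})$ and $g(G)=f(\overline{G})$, but you obtain that swap by a cleaner route. The paper writes down the closed-form expressions for $f$ and $g$ in terms of $(n,k,\lambda,\mu)$, substitutes the complementary parameters, and verifies the swap by ``a page of algebra''; you instead read it off structurally from $A(\overline{G})=J-I-A(G)$ and the shared eigenbasis, which gives the spectrum of $\overline{G}$ as $(n-1-k)^{1}\,(-1-s)^{g}\,(-1-r)^{f}$ with essentially no computation. You are also more careful than the paper on two points it leaves implicit: the sign analysis ($r>0>s$, $s<-1$, hence no zero eigenvalues) that justifies $n^{+}(G)=1+f$ and $n^{-}(G)=g$, and the need to restrict to primitive strongly regular graphs. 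One small correction to your closing aside: for $K_{3,3}$ the negative inertias are $1$ and $4$, so $n^{-}(G)+n^{-}(\overline{G})=5=n-1$ and the second identity does \emph{not} fail there; in fact for any imprimitive SRG $mK_r$ and its complement one gets $m(r-1)+(m-1)=n-1$, so the $n^{-}$ identity survives imprimitivity and it is only the $n^{+}$ identity (and the vanishing of $n^{0}$) that genuinely requires $G$ to be primitive.
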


\begin{proof}
Using standard notation let $G = SRG(n, k, \lambda, \mu)$ have spectrum $(k^1 , r^f , s^g)$, where $1$, $f$ and $g$ are multiplicities. It is well known that $\overline{G} = SRG(n, n-k-1 , n-2k+\mu-2, n-2k+\lambda)$. Clearly $1 + f(G) + g(G) = 1 + f(\overline{G}) + g(\overline{G}) = n$. It is well known that:

\begin{eqnarray*}
f & = & \frac{1}{2}\left((n - 1) - \frac{2k + (n - 1)(\lambda - \mu)}{\sqrt{(\lambda - \mu)^2 +4(k - \mu)}}\right) \\
g & = & \frac{1}{2}\left((n - 1) + \frac{2k + (n - 1)(\lambda - \mu)}{\sqrt{(\lambda - \mu)^2 +4(k - \mu)}}\right)
\end{eqnarray*}
A page of algebra demonstrates that $f(G) = g(\overline{G})$ and $g(G) = f(\overline{G})$. Therefore

\[
n^+(G) + n^+(\overline{G}) = 2 + f(G) +f(\overline{G}) = 2 + f(G) + g(G) = n + 1.
\]
It is then immediate that $n^-(G) + n^-(\overline{G}) = n - 1$
\end{proof}

It is clear that $n^- = n - 1$ for $K_n$. However $n^+ = n - 1$ only for $K_2$, because $\mu_1 \ge |\mu_n|$. The goal is therefore to find an upper bound for $n^+$ as a function of $n$, which is not an upper bound for $n^-$. Since the Nordhaus-Gaddum upper bound for $n^+$ is exact for SRGs, it is plausible that some SRGs will be extremal.

SRGs, other than the pentagon, have $\mu_2 \ge 1$ so it seems likely that SRGs that maximise $n^+$ have $\mu_2 = 1$. A couple of pages of simple algebra demonstrates that when $\mu_2 = 1$ then

\[
n^+ = n - \left(\frac{n - \lambda + 2\mu}{2 - \lambda + \mu}\right) \mbox{  and  } k = 1 -\lambda + 2\mu.
\]
$n^+$ is maximised when $\lambda = 0$, in which case 

\[
n^+ = n - \left(\frac{n + 2\mu}{2 + \mu}\right) \mbox{  and  } k = 1 + 2\mu.
\]
We can also use the well known identity for all SRGs that

\[
n = 1 + k + \frac{k(k - 1 - \lambda)}{\mu}.
\]
This implies that

\[
n^+ = n - \left(\frac{8(n - 1)}{8 + n}\right),
\]
which suggests the following conjecture.

\begin{conjecture}\label{conjecture2}
For any graph 
\[
n^+ \le  n - \left\lfloor\frac{8(n - 1)}{8 + n}\right\rfloor.
\]
\end{conjecture}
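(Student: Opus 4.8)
The plan is to recast the inequality as a statement that bounds $n$ in finitely many cases, and then attack those cases structurally. Put $c:=n^{0}+n^{-}=n-n^{+}$. Since $8(n-1)/(n+8)=8-72/(n+8)$, we have $n-\lfloor 8(n-1)/(n+8)\rfloor=n-8+\lceil 72/(n+8)\rceil$, so (using that $n$ and $n^{+}$ are integers) the asserted bound is equivalent to
\[
\left\lceil \frac{72}{n+8}\right\rceil \;\ge\; 8-c .
\]
If $c\ge 7$ the right-hand side is at most $1$ and there is nothing to prove. So it suffices to handle graphs with $c\le 6$, that is, with at most six non-positive eigenvalues; for such a graph the reformulated inequality reads $n<8(c+2)/(7-c)$, i.e.\ one must show that $n^{+}=n-c$ forces $n\le 3,\,6,\,9,\,15,\,27,\,63$ for $c=1,\dots,6$ respectively. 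The extremal configurations, attaining equality in Conjecture~\ref{conjecture2}, are $2K_{2}$ for $c=2$, the Petersen graph for $c=4$, and the Clebsch graph for $c=5$.

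For the main case $c\le 6$ I would pass to the complement. Here $c\le 6$ means $n^{0}\le 6$ and $n^{-}\le 6$ simultaneously; in particular Cauchy interlacing against a maximum clique gives $\omega(G)\le n^{-}+1\le 7$, so $G$ is $K_{8}$-free, while Corollary~\ref{npmcorollary} gives $\chi(G)\ge n/c$, so for large $n$ the graph $G$ is far from perfect. Moreover the Nordhaus--Gaddum bound $n^{+}(G)+n^{+}(\overline G)\le n+1$ proved above gives $n^{+}(\overline G)\le c+1\le 7$, so $\overline G$ has at most seven positive eigenvalues. If $n^{+}(\overline G)\le 1$ then, by the classical description of graphs with at most one positive eigenvalue, $\overline G$ is essentially complete multipartite and $G$ is essentially a disjoint union of complete graphs; such a graph has one positive eigenvalue per component of order $\ge 2$, so $n^{+}\le n/2$, and together with $n^{+}=n-c\ge n-6$ this forces $n\le 12$. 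The remaining subcase is $2\le n^{+}(\overline G)\le 7$: here one would invoke a structural description of graphs with a prescribed small number of positive eigenvalues, together with the $K_{8}$-freeness and the co-rank bound on $G=\overline{\overline G}$, to conclude that only finitely many such graphs occur and to verify the six bounds above.

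The hard part is exactly this last subcase. Having few negative eigenvalues alone puts no bound on the order of a graph --- every complete tripartite graph has $n^{-}=2$ --- and neither does having few positive eigenvalues --- every complete bipartite graph has $n^{+}=1$; so the bound must come from using $n^{0}\le 6$ and $n^{-}\le 6$ together, i.e.\ from the fact that the adjacency matrix has rank at least $n-6$ yet at most six negative eigenvalues. Such nearly positive semidefinite zero/one matrices with zero diagonal appear very restricted, but I do not know a short argument bounding their order: the moment identities $\sum_i\mu_i=0$, $\sum_i\mu_i^{2}=2m$ and $\sum_i\mu_i^{3}=6\,t(G)$ (with $t(G)$ the number of triangles) yield only bounds on $m$ no stronger than the trivial $2m\le n^{2}$, and the fact that all three extremal graphs are triangle-free suggests that any working argument must be genuinely sensitive to the third moment. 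A classification of graphs with up to seven positive eigenvalues would finish the proof, but the available classifications stop far short of this, which is why the statement is left as a conjecture.
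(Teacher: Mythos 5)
This statement is Conjecture~\ref{conjecture2} of the paper: the authors do not prove it. They only motivate it (the expression $n - 8(n-1)/(8+n)$ is derived as the value of $n^+$ for strongly regular graphs with $\mu_2=1$ and $\lambda=0$) and report a computer check against named graphs on at most $40$ vertices. So there is no proof in the paper to compare yours against, and your attempt must stand on its own. Your arithmetic reformulation is correct and genuinely clarifying: with $c=n-n^+=n^0+n^-$, the bound is equivalent to $\lceil 72/(n+8)\rceil \ge 8-c$, which is vacuous for $c\ge 7$ and for $c\le 6$ says precisely that a graph with exactly $c$ non-positive eigenvalues has at most $3,6,9,15,27,63$ vertices respectively. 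That is a clean restatement of what the conjecture asserts, and your identification of $2K_2$, Petersen and Clebsch as equality cases checks out.

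But it is only a restatement, and the gap is exactly where you say it is --- and that gap is the entire theorem. In the easy subcase $n^+(\overline G)\le 1$, the step ``$\overline G$ essentially complete multipartite, hence $G$ essentially a disjoint union of cliques, hence $n^+\le n/2$'' needs repair when $\overline G$ has isolated vertices (then $G$ is a join of a clique with a union of cliques, not a disjoint union), though this is presumably patchable. The fatal problem is the main subcase $2\le n^+(\overline G)\le 7$: the ``structural description of graphs with a prescribed small number of positive eigenvalues'' you would invoke does not exist in the literature beyond one positive eigenvalue (Smith's theorem) and fragments for two, and the sentence ``conclude that only finitely many such graphs occur'' is the conjecture itself, not a step toward it. Showing that $n-n^+=c\le 6$ forces $n\le 8(c+2)/(7-c)$ is as hard as the original statement, and, as you correctly observe, neither few positive eigenvalues alone nor few negative eigenvalues alone bounds $n$, while the moment identities are too weak. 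So your proposal is an honest reduction plus a correct dispatching of the trivial range $c\ge 7$, but it does not prove the statement --- consistent with the fact that the paper leaves it as a conjecture.
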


This bound is exact for connected graphs such as the Petersen graph and SRG(16, 5, 0 2), and for disconnected graphs such as $2C_5$, without use of the floor function. We have tested this bound against all named graphs with up to 40 vertices in the Wolfram Mathematica database and found no counterexample. Conjecture~\ref{conjecture2} outperforms Corollary~\ref{npmcorollary} for some graphs, such as the Petersen graph.

For large $n$, this bound has $n^+ \approx n$. Such graphs do exist. For example the Taylor family of graphs are $SRG(q^3, \frac{1}{2}(q - 1)(q^2 +1), \frac{1}{4}(q - 1)^3 - 1, \frac{1}{4}(q - 1)(q^2 + 1))$, where $q$ is an odd prime. Nikiforov \cite{nikiforov15} discusses the inertia of these graphs and notes that for sufficiently large $n$, almost all of the eigenvalues are positive, since

\[
n^+ = 1 + (q - 1)(q^2 + 1) = q^3 + q - q^2 = n + n^{\frac{1}{3}} - n^{\frac{2}{3}} \approx n.
\]

\section{Conclusions}

It is worth noting that equation (\ref{eq:pawel}) can be used to prove (generalisations) of the bounds due to Hoffman, Nikiforov and Kolotilina and the bound due to Lima \emph{et al} (see \cite{wocjan13} and \cite{elphick15}) as well as this inertial bound, but we do not know how to prove the Ando and Lin bound using (\ref{eq:pawel}).

In 1976 van Nuffelen conjectured that $\chi(G) \le \mathrm{rank}(A)$, but a counter-example was found on 64 vertices by Alon and Seymour in 1989, which has rank 29 and chromatic number of 32. The spectrum of the counter-example is $56^1 , 4^7 , 0^{35} , -4^{21}$, so this graph provides an example of the Hoffman bound significantly outperforming Theorem 1.

The bound in this paper is the first spectral bound for the chromatic number of which we are aware, which uses only the numbers of positive and negative eigenvalues of a graph.

\end{document}